\newcommand{\R}{\mathbb{R}}
\newcommand{\Rn}{\mathbb{R}^n}
\def\mvint_#1{\mathchoice
          {\mathop{\vrule width 6pt height 3 pt depth -2.5pt
                  \kern -8pt \intop}\nolimits_{#1}}%
          {\mathop{\vrule width 5pt height 3 pt depth -2.6pt
                  \kern -6pt \intop}\nolimits_{#1}}%
          {\mathop{\vrule width 5pt height 3 pt depth -2.6pt
                  \kern -6pt \intop}\nolimits_{#1}}%
          {\mathop{\vrule width 5pt height 3 pt depth -2.6pt
                  \kern -6pt \intop}\nolimits_{#1}}}
\def\sqr#1#2{{\vcenter{\hrule height.#2pt\hbox{\vrule
     width.#2pt height#1pt\kern#1pt\vrule width.#2pt}\hrule height.#2pt}}}
\def\XXint#1#2#3{{\setbox0=\hbox{$#1{#2#3}{\int}$}
     \vcenter{\hbox{$#2#3$}}\kern-.5\wd0}}
\theoremstyle{plain}
\newtheorem{theorem}[equation]{Theorem}
\newtheorem{lemma}[equation]{Lemma}
\newtheorem{corollary}[equation]{Corollary}
\numberwithin{equation}{section}
\theoremstyle{definition}
\newtheorem{example}[equation]{Example}
\theoremstyle{remark}
\newtheorem{remark}[equation]{Remark}
\title{Weak $L^{\infty}$ and $BMO$ in metric spaces}
\author{Daniel Aalto}
\date{\today}
\begin{document}

\keywords{BMO, Weak $L^{\infty}$, John-Nirenberg inequality, doubling
  measure} 

\subjclass[2000]{42C20, 43A85, 26D10, 46E30}

\begin{abstract}
Bennett, DeVore and Sharpley introduced the space weak $L^{\infty}$ in
1981 and studied its relationship with functions of bounded mean
oscillation. Here we characterize the weak
$L^{\infty}$ in measure spaces 
without using the decreasing rearrangement of a function.
Instead, we use exponential estimates for the distribution function. 
In addition, we consider a localized version of the characterization
that leads to a new characterization of BMO. 
\end{abstract}

\maketitle


\section{Introduction}

Bennett DeVore and Sharpley introduced the space weak $L^{\infty}$ in \cite{BDS}.
The definition of the weak $L^{\infty}$ is based on decreasing
rearrangements 
(see also the generalizations in \cite{BMR}, \cite{MaPi} and \cite{MiPu}). 
We give here more geometric characterizations of the weak $L^{\infty}$
by analyzing the decay of the distribution functions. 
The main result in \cite{BDS} states that the weak $L^{\infty}(Q)$, 
where $Q$ is a Euclidean cube, 
is the rearrangement invariant hull of BMO (see also \cite{BS}
and \cite{Ko}). 
We show by an example
that the weak $L^{\infty}$ is not the rearrangement invariant hull of BMO
in general. 

We
localize the geometric characterization of the weak $L^{\infty}$ to obtain 
a new characterization of BMO, too. To show that our argument is based on
a general principle,  
we study the characterizations in doubling measure spaces. Indeed,
there has been  
a considerable interest in extending classical results 
in harmonic analysis to the metric setting, 
see e.g. \cite{CW} and \cite{H}. 
The most important ingredient in our argument is a 
Calder\'on-Zygmund type covering lemma, which may be useful also
elsewhere. 
There are several versions 
of this type of covering lemmas in the literature, see \cite{CW} and \cite{MP}, but the version 
presented here seem not to follow immediately from any of them.

{\bf Acknowledgements.}
The author is supported by the Finnish Academy of Science and Letters,
the Vilho, Yrj\"o and Kalle V\"ais\"al\"a Foundation.

\section{A Characterisation of The Weak $L^{\infty}$}

Let $(X,\mu)$ be a measure space. 
The distribution function of a real-valued function $f$ defined in $X$
is the function 
$d_{f,\mu}:[0,\infty)\to [0,\infty]$ defined by the formula
$$
d_{f,\mu}(\lambda) = \mu (\{x\in X: |f(x)|>\lambda\}).
$$
If $d_{f,\mu}$ is invertible, its inverse function $f^*$ is called the
decreasing rearrangement of $f$. More generally we define the
decreasing rearrangement of $f$ to be
the decreasing function $f^*:(0,\infty)\to [0,\infty]$ defined by
$$
f^*(t)=\inf \{ \lambda : d_{f,\mu}(\lambda)\leq t\}.
$$
Here we use the convention $\inf \emptyset = \infty$. The decreasing
rearrangement is unique, right-continuous, homogeneous, sublinear and
satisfies
$f^*(d_{f,\mu}(\lambda))\leq \lambda$, whenever $d_{f,\mu}(\lambda)$
is finite, and 
$d_{f,\mu}(f^*(t))\leq t$, whenever $f^*(t)$ is finite (c.f. \cite{BS}). 
The preceding 
inequalities can be taken as a definition of $f^*$ as well (as in
\cite{R}).
We say that $f$ defined in $(X,\mu)$
and $g$ defined in $(Y,\nu)$ 
are equimeasurable functions if $d_{f,\mu}=d_{g,\nu}$. 
Observe that $f$ and $f^*$ are
equimeasurable.

The following Cavalieri principle is useful for us: if $f$ is
$\mu$-measurable, then for $0<p<\infty$ we have
\begin{equation}
\label{cava1}
\nonumber
\int_X|f|^pd\mu = p \int_0^{\infty}\lambda^{p-1}d_{f,\mu}(\lambda)d\lambda.
\end{equation}

We define the maximal function $f^{**}$ of $f$ by
$$
f^{**}(t) = \frac{1}{t}\int_0^tf^*(s)ds.
$$
The space weak $L^{\infty}(X)$ or $L^{\infty}_w(X)$ is the collection
of all $f$ so that $f^*$ is finite everywhere and
$$
\sup_{t>0}(f^{**}(t)-f^*(t))<\infty.
$$
Observe that the weak $L^{\infty}(X)$ is rearrangement invariant
and that it fails to be a vectorspace. The next theorem gives a
characterization of the weak $L^{\infty}(X)$.
\begin{theorem}
\label{theorem1}
Let $f\in L^1_{loc}(X)$ so that
$d_{f,\mu}$ is not identically infinite.
Then the following conditions are equivalent:
\begin{itemize}
\item[(i)] The rearrangement of $f$ is
  finite for all $t>0$ and there exists $M\geq 0$ independent of $t$ for which
$$
f^{**}(t)-f^*(t)\leq M.
$$
\item[(ii)] There exist 
constants $\alpha \geq 0$ and $M\geq 0$ so that
for all $\lambda > \alpha$, we have $d_{f,\mu}(\lambda)<\infty$ and
$$
\int_{\{x\in X:|f(x)|>\lambda \} }|f|d\mu 
\leq 
(\lambda + M)d_{f,\mu}(\lambda).
$$
\item[(iii)] There exist 
constants $\alpha \geq 0$ and $M\geq 0$ so that
for all $\lambda > \alpha$, we have $d_{f,\mu}(\lambda)<\infty$ and
$$
\int_{\lambda }^{\infty}d_{f,\mu}(s)ds\leq Md_{f,\mu}(\lambda).
$$
\item[(iv)]
There exist constants $c_1,c_2>0$ so that
$$
d_{f,\mu}(\lambda_2)\leq c_1d_{f,\mu}(\lambda_1)e^{c_2(\lambda_1-\lambda_2)}
$$
for all $\lambda_2>\lambda_1\geq 0$.
\end{itemize}
\end{theorem}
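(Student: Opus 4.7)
The plan is to prove the three equivalences (ii) $\Leftrightarrow$ (iii), (iii) $\Leftrightarrow$ (iv), and (i) $\Leftrightarrow$ (iii); all three rest on two Cavalieri-type identities that I would establish first. The first is
$$
\int_{\{|f|>\lambda\}}|f|\,d\mu \;=\; \lambda\,d_{f,\mu}(\lambda)+\int_{\lambda}^{\infty}d_{f,\mu}(s)\,ds,
$$
obtained by splitting $|f|\chi_{\{|f|>\lambda\}}=\lambda\chi_{\{|f|>\lambda\}}+(|f|-\lambda)_{+}$ and applying the Cavalieri principle to $(|f|-\lambda)_{+}$. Subtracting $\lambda d_{f,\mu}(\lambda)$ from (ii) converts it into (iii) with the same constants, so (ii) $\Leftrightarrow$ (iii) is immediate. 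The second identity is
$$
f^{**}(t)-f^{*}(t) \;=\; \frac{1}{t}\int_{f^{*}(t)}^{\infty}d_{f,\mu}(s)\,ds,
$$
which follows from $\int_0^t f^{*}(s)\,ds = \int_0^\infty\min(t,d_{f,\mu}(r))\,dr$ (Cavalieri applied to $f^{*}$ together with the equimeasurability $d_{f^{*},m}=d_{f,\mu}$) and the fact that $d_{f,\mu}(r)\leq t$ precisely for $r\geq f^{*}(t)$, by right-continuity of $d_{f,\mu}$.

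Using the second identity, (i) $\Leftrightarrow$ (iii) amounts to a change of variable between $\lambda$ and $t$. For (i) $\Rightarrow$ (iii): given $\lambda$ with $d_{f,\mu}(\lambda)<\infty$, I would set $t=d_{f,\mu}(\lambda)$, use $f^{*}(t)\leq\lambda$, and read off $\int_{\lambda}^{\infty}d_{f,\mu}\leq t\,(f^{**}(t)-f^{*}(t))\leq M\,d_{f,\mu}(\lambda)$; note that (i) forces $d_{f,\mu}(\lambda)\to 0$, which, combined with the standing assumption that $d_{f,\mu}$ is not identically infinite, delivers a suitable $\alpha$. For (iii) $\Rightarrow$ (i): given $t>0$, set $\lambda=f^{*}(t)$. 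If $\lambda>\alpha$, then (iii) combined with $d_{f,\mu}(\lambda)\leq t$ gives $f^{**}(t)-f^{*}(t)\leq M$; if $\lambda\leq\alpha$, I would split the integral at $\alpha$ so that the part on $[f^{*}(t),\alpha]$ is at most $\alpha t$ by monotonicity of $d_{f,\mu}$ and the part on $[\alpha,\infty)$ is at most $M\,d_{f,\mu}(\alpha)\leq Mt$ (using right-continuity at $\alpha$), yielding $f^{**}(t)-f^{*}(t)\leq \alpha+M$.

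For (iii) $\Leftrightarrow$ (iv), let $F(\lambda)=\int_{\lambda}^{\infty}d_{f,\mu}(s)\,ds$, which is locally absolutely continuous on $(\alpha,\infty)$ with $F'=-d_{f,\mu}$ almost everywhere. Condition (iii) then reads $F\leq -MF'$, i.e.\ $(\log F)'\leq -1/M$, and integration yields $F(\lambda_{2})\leq F(\lambda_{1})e^{-(\lambda_{2}-\lambda_{1})/M}$. To convert this back to a bound on $d_{f,\mu}$, I would exploit that $d_{f,\mu}$ is decreasing to write $M\,d_{f,\mu}(\lambda_{2})\leq F(\lambda_{2}-M)$ for $\lambda_{2}\geq\lambda_{1}+M$, which gives $d_{f,\mu}(\lambda_{2})\leq e\,d_{f,\mu}(\lambda_{1})e^{-(\lambda_{2}-\lambda_{1})/M}$; for $\lambda_{2}\in(\lambda_{1},\lambda_{1}+M]$ bare monotonicity suffices. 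A further factor $e^{\alpha/M}$ in $c_{1}$, obtained by taking a right limit at $\alpha$, extends (iv) to all $\lambda_{1}\in[0,\alpha]$ where $d_{f,\mu}(\lambda_{1})<\infty$. The converse (iv) $\Rightarrow$ (iii) is a one-line integration over $[\lambda_{1},\infty)$ giving (iii) with $\alpha=0$ and $M=c_{1}/c_{2}$.

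The main technical point will be the passage from $F$-decay to $d_{f,\mu}$-decay in (iii) $\Rightarrow$ (iv); everything else is bookkeeping around the threshold $\alpha$ and the handling of boundary values.
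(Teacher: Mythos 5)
Your proposal is correct, and its backbone coincides with the paper's: both rest on the same two Cavalieri-type identities, namely $\int_{\{|f|>\lambda\}}|f|\,d\mu = \lambda\,d_{f,\mu}(\lambda)+\int_{\lambda}^{\infty}d_{f,\mu}$ and $f^{**}(t)-f^{*}(t)=\tfrac{1}{t}\int_{f^{*}(t)}^{\infty}d_{f,\mu}$, and both use the change of variable $t\leftrightarrow d_{f,\mu}(\lambda)$, $\lambda\leftrightarrow f^{*}(t)$ via the inequalities $f^{*}(d_{f,\mu}(\lambda))\le\lambda$ and $d_{f,\mu}(f^{*}(t))\le t$. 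Your routing (proving (i)$\Leftrightarrow$(iii) rather than (i)$\Leftrightarrow$(ii)) is a cosmetic reshuffle, since (ii)$\Leftrightarrow$(iii) is immediate. The one genuine methodological difference is in (iii)$\Rightarrow$(iv): you treat $F(\lambda)=\int_{\lambda}^{\infty}d_{f,\mu}$ as an absolutely continuous function with $F'=-d_{f,\mu}$ a.e., read (iii) as the differential inequality $F\le -MF'$, integrate the resulting $(\log F)'\le -1/M$ (a Gronwall argument), and then transfer the exponential decay of $F$ back to $d_{f,\mu}$ by the monotonicity bound $M\,d_{f,\mu}(\lambda_2)\le F(\lambda_2-M)$. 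The paper instead stays discrete: it rewrites $\int_s^\infty g\le Mg(s)$ for a decreasing $g$ as $\sum_{i\ge1}g(s+iM)\le g(s)$, iterates to get $g(s+kM)\le 2^{1-k}g(s)$, and reads off the exponential bound with $c_1=4$, $c_2=(\log 2)/M$. Both arguments are elementary and give constants of the same order; yours is more in the spirit of the usual Gronwall lemma but needs the small absolute-continuity observation about $F$ and $\log F$ (and the trivial dispatch of the case $F=0$), while the paper's recursive inequality, recorded as (\ref{ineqg}), is reused verbatim later in the proofs of the Corollary and Theorem~\ref{abmokar}, which is a structural reason to prefer it within the paper. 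One point you gloss over: in (iii)$\Rightarrow$(i) you should note explicitly that $f^{*}(t)<\infty$ for all $t>0$; this follows at once from (iii) since otherwise $d_{f,\mu}(\lambda)>t$ for all $\lambda>\alpha$ makes the integral $\int_{\lambda}^{\infty}d_{f,\mu}$ infinite, contradicting $Md_{f,\mu}(\lambda)<\infty$. Your handling of the thresholds and boundary values (the case $\lambda_2\in(\lambda_1,\lambda_1+M]$ by monotonicity, the extra factor $e^{\alpha/M}$ for $\lambda_1\le\alpha$) is sound.
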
 

\begin{proof}
Observe that, as a consequence of Cavalieri's principle, 
if $f^*(t)$ is finite, we have
\begin{eqnarray}
\nonumber
\int_0^tf^*(s)ds
&=&
\int_0^{f^*(t)}td\lambda
+
\int_{f^*(t)}^{\infty}|\{s>0:f^*(s)>\lambda\}|d\lambda
\\
\nonumber
&=&
tf^*(t)+\int_{f^*(t)}^{\infty}d_{f,\mu}(\lambda)d\lambda.
\end{eqnarray}
This implies that
\begin{equation}
\label{id1}
f^{**}(t)-f^{*}(t)=\frac{1}{t}\int_{f^*(t)}^{\infty}d_{f,\mu}(\lambda)d\lambda.
\end{equation}

Let us assume the condition (i) and show that (ii) follows.
For $f\in L^{\infty}_w(X)$ we define
$$
\alpha = \lim_{t\to \infty}f^*(t).
$$
The limit exists since $f^*$ is decreasing and bounded below. Pick any
$s> \alpha$. If $d_{f,\mu}(s)=0$, there is nothing to prove.
Hence, we may assume $d_{f,\mu}(s)>0$. 
Using Cavalieri's principle we get
\begin{eqnarray}
\label{cava2}
\int_{\{x\in X:|f(x)|>s\}}|f|d\mu
&=&
\int_s^{\infty}d_{f,\mu}(\lambda)d\lambda
+
sd_{f,\mu}(s).
\end{eqnarray}
Since
$f^*(d_{f,\mu}(s))
\leq
s$, by (\ref{id1}) we have
\begin{eqnarray}
\nonumber
\int_{\{x\in X:|f(x)|>s\}}|f|d\mu
&\leq&
\int_{f^*(d_{f,\mu}(s))}^{\infty}d_{f,\mu}(\lambda)d\lambda + sd_{f,\mu}(s)
\\
\nonumber
&=&
\left(f^{**}(d_{f,\mu}(s))-f^{*}(d_{f,\mu}(s)) + s\right)d_{f,\mu}(s)
\\
\nonumber
&\leq&
(M+s)d_{f,\mu}(s).
\end{eqnarray}

Assume then that 
the condition (ii) holds.
Take any $t>0$.
Since
$$
d_{f,\mu}(f^*(t))
\leq t,
$$
by inequalities (\ref{id1}) and (\ref{cava2}) we have
\begin{eqnarray}
\nonumber
t(f^{**}(t)-f^{*}(t))
&=&
\int_{f^*(t)}^{\infty}d_{f,\mu}(\lambda)d\lambda
\\
\nonumber
&=&
\int_{\{x\in X:|f(x)|>f^*(t)\}}|f|d\mu-f^*(t)d_{f,\mu}(f^*(t))\\
\nonumber
&\leq&
Md_{f,\mu}(f^*(t))\leq Mt.
\end{eqnarray}

This proves the equivalence of the first two conditions.

The conditions (ii) and (iii) are equivalent by the formula (\ref{cava2}). 

Assume then that the condition (iii) is valid.
The condition (iv) is a consequence of a general real analysis principle. Indeed, given any decreasing function $g$ that satisfies
$$
\int_{s}^{\infty}g(t)dt \leq Mg(s)
$$
for every $s>\alpha$, we have
$$
g(s+t)\leq c_1g(s)e^{-c_2t/M}
$$
for every $s>\alpha$ and $t>0$ with $c_1=4$ and $c_2=\log 2$. To see this, observe that
since $g$ is decreasing, the integral condition can be rewritten as
$$
\sum_{i=1}^{\infty}g(s+iM) \leq g(s)
$$
for all $s> \alpha$. Using the condition recursively, we have
$$
g(s) \geq 
2\sum_{i=2}^{\infty}g(s+iM) \geq
2^{k-1}g(s+kM)
$$
for all $s>\alpha$ and every positive integer $k$. Since there
exists a smallest positive integer so that $t< kM$, we may apply the
preceding inequality to obtain 
\begin{equation}
\label{ineqg}
g(s+t) \leq g(s+(k-1)M) \leq 2^{2-k}g(s) \leq 4 e^{-\log 2t/M}g(s).
\end{equation}
The condition (iv) follows with $g$ replaced by the distribution
function of $f$. 

Assume now the condition (iv). Then
\begin{eqnarray}
\nonumber
\int_{\{x\in X:|f(x)|>\lambda\}}|f|d\mu
&=&
\int_{\lambda}^{\infty}d_{f,\mu}(s)ds + \lambda d_{f,\mu}(\lambda)
\\
\nonumber
&\leq&
\int_{\lambda}^{\infty}c_1d_{f,\mu}(\lambda)e^{c_2(\lambda-s)}ds 
+ \lambda d_{f,\mu}(\lambda)
\\
\nonumber
&\leq&
\left(\frac{c_1}{c_2}+\lambda\right)d_{f,\mu}(\lambda).
\end{eqnarray}
Hence we have the second condition with $M=c_1/c_2$.
\end{proof}

The previous theorem provides us with interesting knowledge on the
behaviour of the functions in $L^{\infty}_w(X)$. Indeed, there are no
big gaps in the distribution function of $f$. In terms of decreasing
rearrangements we see that given $f$ and $M$ as in the second
condition, $f^*$ is continuous except for a countable set of points
where the size of the jump is at most $M$. In addition, the mass is always concentrated on the low level sets. 
\begin{corollary}
Let $f\in L^{\infty}_w(X)$. Suppose that $d_{f,\mu}(\lambda)$ is positive and
finite. Then for any $\gamma >1$ we have
$$
\frac{\mu \left(\{ x\in X: \lambda < |f(x)| \leq \lambda + \gamma
    M\}\right)}
{\mu \left(\{ x\in X: \lambda < |f(x)| \}\right)} 
\geq
(1-2^{2-\gamma}),
$$
where $M$ is the smallest constant satisfying the second condition in
Theorem \ref{theorem1}.
\end{corollary}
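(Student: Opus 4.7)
The plan is to reduce the statement to a direct application of the pointwise exponential decay estimate~(\ref{ineqg}) that was established during the proof of Theorem~\ref{theorem1}. Observe first that the numerator of the ratio equals
$$
\mu(\{\lambda < |f| \leq \lambda + \gamma M\}) = d_{f,\mu}(\lambda) - d_{f,\mu}(\lambda + \gamma M),
$$
so, since $d_{f,\mu}(\lambda) > 0$, the assertion is equivalent to the one-line inequality
$$
d_{f,\mu}(\lambda + \gamma M) \leq 2^{2-\gamma} d_{f,\mu}(\lambda).
$$
Thus the whole task is to produce an exponential-decay estimate for the distribution function past the level $\lambda$ with explicit constants tied to $M$.

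Next I would invoke the equivalence of conditions (ii) and (iii) in Theorem~\ref{theorem1}, which was established via identity~(\ref{cava2}) with the \emph{same} constant $M$. Hence, if $M$ is the smallest constant for which (ii) holds, then $d_{f,\mu}$ itself is a decreasing function satisfying $\int_s^\infty d_{f,\mu}(t)\,dt \leq M\, d_{f,\mu}(s)$ for every $s > \alpha$. In particular, the hypothesis that $d_{f,\mu}(\lambda)$ is positive and finite ensures that we are in the regime $\lambda > \alpha$, so this integral inequality applies at $s = \lambda$.

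At this point I would apply inequality~(\ref{ineqg}) from the proof of Theorem~\ref{theorem1} (the (iii)$\Rightarrow$(iv) step), which gives, with $g = d_{f,\mu}$, $s = \lambda$, and $t = \gamma M$,
$$
d_{f,\mu}(\lambda + \gamma M) \leq 4\, e^{-\log 2 \cdot \gamma M / M} d_{f,\mu}(\lambda) = 4 \cdot 2^{-\gamma} d_{f,\mu}(\lambda) = 2^{2-\gamma} d_{f,\mu}(\lambda).
$$
Substituting this into the displayed identity for the numerator and dividing through by $d_{f,\mu}(\lambda)$ yields the claimed lower bound $1 - 2^{2-\gamma}$.

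There is essentially no obstacle here: the corollary is a repackaging of~(\ref{ineqg}) for the specific choice $g = d_{f,\mu}$, and the only point requiring care is matching the constant $M$ across conditions (ii) and (iii) so that the constants $c_1 = 4$ and $c_2 = \log 2$ deliver precisely $2^{2-\gamma}$ rather than some larger expression.
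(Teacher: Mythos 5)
Your argument is correct and is essentially the paper's proof: both reduce the claim to the estimate $d_{f,\mu}(\lambda+\gamma M)\leq 2^{2-\gamma}d_{f,\mu}(\lambda)$ and obtain it by applying inequality~(\ref{ineqg}) to $g=d_{f,\mu}$ with $s=\lambda$, $t=\gamma M$. You are slightly more explicit than the paper in observing that the constant $M$ from condition (ii) carries over unchanged to condition (iii) via identity~(\ref{cava2}), which is the point that makes the constants come out exactly to $c_1=4$, $c_2=\log 2$; this is a useful clarification but not a different route.
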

\begin{proof}
By Theorem \ref{theorem1}(ii) we have
$$
\int_{\{x\in X:|f(x)|>\lambda\}}|f|d\mu \leq (\lambda +
M)d_{f,\mu}(\lambda).
$$
Inequality
(\ref{ineqg}) in the proof of Theorem \ref{theorem1} implies
$$
d_{f,\mu}(\lambda+\gamma M) \leq d_{f,\mu}(\lambda)2^{2-\gamma}
$$
and the claim follows.
\end{proof}

Sometimes it is possible to calculate precisely the integral average
of a function over its level sets. In the following example we have a
function which is extremal for the second condition of Theorem
\ref{theorem1}.

\begin{example}
Let $f:\Rn \to \R$ with $f(x)=\log (|x|^{-n})\chi_{B(0,1)}$, 
when $x\neq 0$ and $f(0)=0$. 
Then $f$ belongs to
$L^{\infty}_w(\Rn)$, since
$$
\int_{\{x\in \Rn:|f(x)|>\lambda\}}|f(y)|dy 
= 
(\lambda +1)
|\{x\in \Rn:|f(x)|>\lambda\}|
$$
for all $\lambda \geq 0$. 
\end{example}

\section{Functions of bounded mean tail oscillation}

In this section we localize the definition of the weak $L^{\infty}(X)$.
Let $(X,d,\mu)$ be a metric measure space.
A ball with radius $r>0$ and center $x$ is denoted by
$$
B(x,r)=\{ y\in X: d(y,x) < r\}
$$
Let $f$ be a real-valued function defined in
$(X, d, \mu)$. 
We write
$$
f_B = \mvint_Bfd\mu = \frac{1}{\mu(B)}\int_Bfd\mu
$$
for the mean value integral over the  ball $B$. 
If $\mu(B)=0$, then we set $f_B=0$.
A locally integrable function $f$ is of bounded mean oscillation, if
there exists $M\geq 0$ so that
$$
\mvint_B|f-f_B|d\mu \leq M
$$
for every ball $B\subset X$ and we write $f\in BMO$.
Similarly, if there exists $M\geq 0$ so that 
$$
\int_{\{x\in B:|f(x)-f_B|>\lambda\} }|f-f_B|d\mu
\leq
(\lambda+M)\mu(\{x\in B:|f(x)-f_B|>\lambda\})
$$
for all $\lambda>0$ and for all  balls $B\subset X$, 
we say that $f$ is of bounded mean tail oscillation and write
$f\in BMTO(X)$. Functions of bounded mean tail oscillation are of
bounded mean oscillation and satisfy the John-Nirenberg inequality.
\begin{theorem}
\label{abmokar}
Let $f\in L^1_{loc}(X)$. Then the following conditions are equivalent:
\begin{itemize}
\item[(i)]  There exists $M\geq 0$ so that
\begin{align}
&\int_{\{x\in B: |f(x)-f_B|>\lambda \}}|f-f_B|d\mu 
\nonumber\\ \nonumber
&\qquad\leq 
(\lambda+M)\mu(\{x\in B:|f(x)-f_B|>\lambda\})
\end{align}
for all $\lambda \geq 0$ and for all  balls $B\subset X$. 
\item[(ii)] There exist constants $c_1,c_2>0$ so that
\begin{align}
&\mu(\{x\in B:|f(x)-f_B|>\lambda_2\})
\nonumber\\
\nonumber
&\qquad \leq
c_1\mu(\{x\in B:|f(x)-f_B|>\lambda_1\})e^{c_2(\lambda_2-\lambda_1)}
\end{align}
for all $0\leq \lambda_1<\lambda_2$ and for all  balls $B\subset X$.
\end{itemize}
In addition, if $f$ is of bounded mean tail oscillation, then
$f\in BMO(X)$ with
$$
\sup _B \mvint_{B}|f-f_B|d\mu \leq M
$$
and $f$ satisfies
the John-Nirenberg inequality: there exist $c_1,c_2>0$ so that
$$
\mu(\{x\in B: |f-f_B|>\lambda\})
\leq
c_1\mu(B)e^{-c_2\frac{\lambda}{M}}
$$
for all $\lambda>0$.
\end{theorem}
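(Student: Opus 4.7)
The plan is to localise Theorem \ref{theorem1}: for each ball $B \subset X$, I apply it to the function $g_B := (f - f_B)\chi_B$ viewed as a measurable function on the space $(B, \mu|_B)$. Then $d_{g_B, \mu}(\lambda) = \mu(\{x \in B : |f(x) - f_B| > \lambda\})$; since $f$ is locally integrable and $\mu(B) < \infty$, this distribution function is finite for every $\lambda \geq 0$ and vanishes for sufficiently large $\lambda$. Condition (i) above, restricted to one ball $B$, is exactly condition (ii) of Theorem \ref{theorem1} for $g_B$ with threshold $\alpha = 0$, while condition (ii) above is condition (iv) of Theorem \ref{theorem1} for $g_B$. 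Theorem \ref{theorem1} has already established (ii) $\Leftrightarrow$ (iv) with constants that transform in a way depending only on $M$ (or on $c_1, c_2$) and not on the particular function, so the equivalence between (i) and (ii) here holds uniformly in $B$.

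For the BMO bound, I would evaluate condition (i) at $\lambda = 0$. The set $\{x \in B : |f - f_B| > 0\}$ carries all of the mass of $|f - f_B|$, so the left-hand side equals $\int_B |f - f_B|\, d\mu$, while the right-hand side is bounded by $M \mu(B)$. Dividing by $\mu(B)$ and taking the supremum over $B$ gives $\sup_B \mvint_B |f - f_B|\, d\mu \leq M$, so $f \in BMO(X)$ with the stated norm bound.

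For the John--Nirenberg inequality I would recycle the quantitative real-analysis step already isolated in the proof of Theorem \ref{theorem1}. By (\ref{cava2}) applied to $g_B$, condition (i) on $B$ is equivalent to $\int_\lambda^\infty d_{g_B, \mu}(s)\, ds \leq M\, d_{g_B, \mu}(\lambda)$, i.e.\ condition (iii) of Theorem \ref{theorem1} for $g_B$ with $\alpha = 0$. Inequality (\ref{ineqg}) applied to the decreasing function $g(\lambda) := \mu(\{x \in B : |f - f_B| > \lambda\})$ with $s = 0$ then yields
$$
\mu(\{x \in B : |f - f_B| > \lambda\}) \leq 4\, g(0)\, e^{-(\log 2)\lambda / M} \leq 4 \mu(B)\, e^{-(\log 2)\lambda / M},
$$
which is the claimed John--Nirenberg inequality with $c_1 = 4$ and $c_2 = \log 2$. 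The only points that require care are checking that $g_B$ fits the hypothesis of Theorem \ref{theorem1} (automatic since $\mu(B) < \infty$ and $g_B$ is integrable) and keeping track of how the constant $M$ in condition (i) is preserved in the exponent, which is precisely what the explicit form of (\ref{ineqg}) provides.
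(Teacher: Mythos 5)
Your proposal matches the paper's proof essentially line for line: localise Theorem \ref{theorem1} to the measure space $(B,\mu|_B)$ with $\alpha=0$, obtain (i)$\Leftrightarrow$(ii) from (ii)$\Leftrightarrow$(iv) there, get the $BMO$ bound by setting $\lambda=0$, and read off John--Nirenberg from inequality (\ref{ineqg}). One small inaccuracy worth deleting: $d_{g_B,\mu}$ need not ``vanish for sufficiently large $\lambda$'' (e.g.\ an unbounded $BMO$ function on $B$), but this aside is unused — all Theorem \ref{theorem1} requires is that the distribution function is not identically infinite, which your parenthetical $\mu(B)<\infty$ already supplies.
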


\begin{proof}
Suppose that $f\in BMTO(X)$. Fix a ball $B$ and
consider the measure space $(B,\mu|_B)$. Then
$f-f_B\in L^{\infty}_w(B,\mu|_B)$ with $\alpha =0$ and we may
apply Theorem \ref{theorem1}. Hence, by inequality (\ref{ineqg}), we have
\begin{equation}
\nonumber
\mu(\{x\in B:|f(x)-f_B|>\lambda _2\})
\leq
4\mu(\{x\in B:|f(x)-f_B|>\lambda _1\})2^{(\lambda_1-\lambda_2)/M}.
\end{equation}
Since $B$ is arbitrary, the second condition follows.

Assume then that $f$ satisfies the second condition.
Applying Theorem \ref{theorem1} to measure space
$(B,\mu|_B)$ for every  ball $B\subset X$ we get the first
condition with $M = c_1/c_2$.

For the $BMO$ condition we use the  $BMTO$ condition with
$\lambda =0$. The John-Nirenberg inequality follows at once.
\end{proof}

\section{$BMO$ and the weak $L^{\infty}$ for doubling measures}

In this section the focus is in the connection between $BMO$ and
$L^{\infty}_w$. We assume that the measure space is doubling which
guarantees covering properties of the space. These can be used to
prove that the space $BMO$ is included in $L^{\infty}_w$. We also
characterize essentially bounded functions and 
functions of bounded mean oscillation
with a condition similar to the $BMTO$ condition.

\subsection{Doubling measures}
Let $(X,d,\mu)$ be a metric space endowed with a metric $d$ and a
Borel regular measure $\mu$ so that all open balls
have positive and finite measure. During the rest of
the current section we assume that the measure $\mu$ is doubling, i.e.
there exists a constant $c_{\mu}\geq 1$, called the doubling constant
of $\mu$, so that
$$
\mu (B(x,2r))\leq c_{\mu}\mu(B(x,r))
$$
for all $x\in X$ and $r>0$.

The doubling condition implies a covering theorem.
Indeed, given any collection of balls
with uniformly bounded radius, there exists a pairwise disjoint,
countable subcollection of balls, whose 5-dilates cover the union of
the original collection. This theorem implies Lebesgue's
differentiation theorem, which guarantees that any locally integrable
function can be approximated at almost every point by integral
averages of the function over a contracting sequence of balls. For the
proofs we refer to \cite{CW} and \cite{H}.

\subsection{Essentially bounded functions}

Let us study the localised weak $L^{\infty}$.
Here we show that essentially bounded
functions can be
characterized as functions satisfying the weak $L^{\infty}$ condition
in every ball of the space. 
\begin{lemma}
\label{Linfty}
Let $f$ be a measurable function in $X$. 
Then $f\in L^{\infty}(X)$ if and only if there exists $M\geq 0$
so that
$$
\int_{\{x\in B: |f(x)|>\lambda \}}|f|d\mu 
\leq (\lambda + M)\mu(\{x\in B:|f(x)|>\lambda\}),
$$
for all  balls $B\subset X$ and for all $\lambda \geq 0$. 
\end{lemma}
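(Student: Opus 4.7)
The plan is to handle the two implications separately and then flag that the substantive content of the lemma essentially collapses to the $\lambda=0$ case together with Lebesgue differentiation, which is available since $\mu$ is doubling.

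For the forward implication, I would suppose $f\in L^{\infty}(X)$ and set $K=\|f\|_{\infty}$. On the set $\{x\in B:|f(x)|>\lambda\}$ one has $|f|\leq K$ almost everywhere, so
\[
\int_{\{x\in B:|f(x)|>\lambda\}}|f|\,d\mu
\leq K\,\mu(\{x\in B:|f(x)|>\lambda\})
\leq (\lambda+K)\,\mu(\{x\in B:|f(x)|>\lambda\}),
\]
and the condition holds with $M=K$. This direction requires no covering results and no hypothesis on the measure.

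For the converse, the idea is to specialize the hypothesis to $\lambda=0$. Since $\{x\in B:|f(x)|>0\}\subset B$ and $|f|$ is supported on $\{|f|>0\}$, the hypothesis reduces to
\[
\int_B |f|\,d\mu\leq M\,\mu(B)\qquad\text{for every ball }B\subset X.
\]
In particular $f\in L^1_{\loc}(X)$, so Lebesgue's differentiation theorem (valid in the doubling metric setting, as recalled in the subsection on doubling measures) applies: for $\mu$-a.e.\ $x\in X$,
\[
|f(x)|=\lim_{r\to 0}\mvint_{B(x,r)}|f|\,d\mu\leq M.
\]
Hence $f\in L^{\infty}(X)$ with $\|f\|_{\infty}\leq M$, which completes the proof.

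The main, and really only, point to recognize is that the extra information encoded by the condition for $\lambda>0$ is not needed here: the weakest slice $\lambda=0$ already bounds all mean values of $|f|$ uniformly, and uniform control of mean values over balls is the exact hypothesis Lebesgue differentiation converts into a pointwise bound. There is no serious obstacle; the only thing to check carefully is that $f\in L^1_{\loc}(X)$ so that Lebesgue differentiation can legitimately be invoked, which is itself a consequence of the $\lambda=0$ specialization of the hypothesis.
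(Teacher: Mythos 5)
Your proof is correct. The forward direction is the same as the paper's. For the converse, both you and the paper in fact use only the $\lambda=0$ case of the hypothesis together with Lebesgue differentiation, but the packaging differs: the paper looks at the set $A=\{|f|>2M\}$, assumes $\mu(A)>0$, picks a density point $a$ of $A$, and derives a contradiction from $2M\mu(B(a,r)\cap A)\le M\mu(B(a,r))$ once the density at $a$ exceeds $1/2$; you instead apply the differentiation theorem directly to $|f|$, noting that all ball averages of $|f|$ are bounded by $M$. Your route is a bit more streamlined and yields the sharper bound $\|f\|_{\infty}\le M$ rather than $\le 2M$, and you correctly flag the one preliminary point that needs checking, namely that $f\in L^1_{\loc}$, which the $\lambda=0$ slice also provides since balls have finite measure. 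Both arguments rely on the doubling hypothesis only through the availability of the Lebesgue differentiation theorem.
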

\begin{proof}

Let $f\in L^{\infty}(X)$ and $B\subset X$ an arbitrary ball. Then
$$
\int_{\{x\in B: |f(x)|>\lambda\}}|f|d\mu
\leq
\|f\|_{L^{\infty}(X)}\mu(\{x\in B: |f(x)|>\lambda\})
$$
and the necessity of the condition follows.

For the sufficiency, 
let $f$ be a function on $X$ satisfying
$$
\int_{\{x\in B: |f(x)|>\lambda \}}|f|d\mu 
\leq (\lambda + M)\mu(\{x\in B: |f(x)|>\lambda\}),
$$
for all balls $B\subset X$ and for all $\lambda \geq 0$ with some
$M\geq 0$. Define $A =\{x\in X: |f(x)|>2M\}$. Suppose
$\mu(A)>0$. By Lebesgue's differentiation theorem, $A$ has at least
one density point, say $a\in A$. Hence, there exists a ball
$B(a,r) \subset X$ so that
$$
\frac{\mu (B(a,r)\cap A)}{\mu(B(a,r))} > \frac{1}{2}.
$$
On the other hand
\begin{eqnarray}
\nonumber
2M\mu(B(a,r)\cap A)
&\leq&
\int_{B(a,r)\cap A}|f|d\mu
\\
\nonumber
&\leq&
\int_{\{x\in B(a,r):|f(x)|>0\}}|f|d\mu
\\
\nonumber
&\leq&
(0 + M)\mu(B(a,r)).
\end{eqnarray}
This is a contradiction and hence the proof is complete.
\end{proof}

\subsection{A covering lemma}

Here we present a Calder\'on-Zygmund type covering lemma which is a
generalization of the Lemma 3.2 in \cite{BDS}.
\begin{lemma}
\label{coveringlemma}
Let $B_0$ be a ball. 
Let $F\subset 3B_0$ be a
measurable set and denote
$E=F\cap B_0$. If $\mu (F)\leq \frac{1}{2}\mu(B_0)$,
then there exists a disjoint family of balls contained in $3B_0$ so that
\begin{eqnarray}
\nonumber
&(i)&\mu(5B_i\cap F) \leq \mu(5B_i\setminus F),\qquad \qquad \\
\nonumber
&(ii)&\mu\left(E\setminus \bigcup_{i=1}^{\infty}5B_i\right)=0,\\
\nonumber
&(iii)&\sum_{i=1}^{\infty} \mu (5B_i) \leq c \mu (F),
\end{eqnarray}
where $c$ depends only on the doubling constant of the measure $\mu$. 
\end{lemma}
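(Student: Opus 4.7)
The plan is to run a Calder\'on--Zygmund style stopping time on balls centered at density points of $E$ and then apply the $5$-covering theorem recalled just before the lemma to extract the required disjoint subfamily. The crucial design choice is to take the stopping radius as a \emph{supremum}, rather than as a dyadic infimum: with the supremum, the low $F$-density condition on $5B_i$ needed for (i) and the high $F$-density condition on $B_i$ needed for (iii) both hold at the same radius.

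By Lebesgue's differentiation theorem, for $\mu$-almost every $x\in E$ one has $\mu(B(x,r)\cap F)/\mu(B(x,r))\to 1$ as $r\to 0$. For each such $x$ I would set
$$
r_x \;=\; \sup\bigl\{\,r>0 : \mu(B(x,r)\cap F) > \tfrac{1}{2}\mu(B(x,r))\,\bigr\},
$$
which is strictly positive. Writing $r_0$ for the radius of $B_0$, the inclusion $B_0\subset B(x,2r_0)$ (valid for any $x\in B_0$ by the triangle inequality) together with $\mu(F)\leq\tfrac{1}{2}\mu(B_0)$ gives
$$
\mu(B(x,r))\geq \mu(B_0)\geq 2\mu(F)\geq 2\mu(B(x,r)\cap F)
$$
for every $r\geq 2r_0$, so $r_x\leq 2r_0$ and consequently $B(x,r_x)\subset 3B_0$. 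The supremum definition has two key consequences. First, any $r>r_x$ lies outside the defining set, so $\mu(B(x,r)\cap F)\leq\tfrac{1}{2}\mu(B(x,r))$; applying this at $r=5r_x$ (legitimate because $r_x>0$) supplies the estimate on $5B_i$ needed for (i). Second, approximating $r\uparrow r_x$ along the increasing family of open balls $B(x,r)\uparrow B(x,r_x)$ and using continuity of $\mu$ from below yields $\mu(B(x,r_x)\cap F)\geq\tfrac{1}{2}\mu(B(x,r_x))$, which is the high-density input for (iii).

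With these $r_x$ in hand I would apply the $5$-covering theorem to the family $\{B(x,r_x)\}_x$, whose radii are bounded by $2r_0$, and obtain a countable pairwise disjoint subfamily $B_i=B(x_i,r_{x_i})\subset 3B_0$ with $\bigcup_x B(x,r_x)\subset\bigcup_i 5B_i$. Condition (i) is then immediate from the first property applied with $r=5r_{x_i}$; condition (ii) follows because $\mu$-almost every $x\in E$ is a density point and therefore belongs to $B(x,r_x)\subset\bigcup_i 5B_i$; for (iii), pairwise disjointness of $\{B_i\}$ and the bound $\mu(B_i\cap F)\geq\tfrac{1}{2}\mu(B_i)$ give $\sum_i\mu(B_i)\leq 2\mu(F)$, and a bounded number of doublings yields $\mu(5B_i)\leq c_\mu^{3}\mu(B_i)$, hence $\sum_i\mu(5B_i)\leq 2c_\mu^{3}\mu(F)$.

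The main obstacle is really the design of the stopping rule. A dyadic or infimum-based choice only controls $F$-density on $B(x,r_x/2)$, whose $5$-dilate $B(x,5r_x/2)$ is strictly smaller than $5B_i$; summing that estimate over the disjoint $B_i$ does not bound $\sum\mu(5B_i)$ unless one establishes bounded overlap of the auxiliary balls $B(x_i,5r_{x_i}/2)$, which is not a consequence of disjointness of the $B_i$ alone. The supremum definition sidesteps this completely, since the low-density condition then automatically propagates from $r_x$ to every larger radius.
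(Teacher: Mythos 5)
Your proposal is correct and follows the same overall strategy as the paper: fix density points $x\in E$, run a stopping-time selection of a radius $r_x$ that gives low $F$-density on a dilate and high $F$-density on a contraction, apply the $5$-covering theorem to get a disjoint subfamily, and then sum the high-density estimates over the disjoint balls using three doublings. The only real difference is the stopping rule. The paper picks the greatest dyadic radius $2^{1-k}r_0$ at which $B(x,2^{1-k}r_0)$ has low $F$-density, calls this ball $5B_x$, and by maximality obtains high $F$-density on $B(x,2^{1-(k+3)}r_0)=\tfrac{5}{8}B_x\subset B_x$; the $\tfrac58 B_i$ are then disjoint, and $\mu(5B_i)\le c_\mu^3\mu(\tfrac58 B_i)\le 2c_\mu^3\mu(\tfrac58 B_i\cap F)$ gives (iii). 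You instead take $r_x$ to be the supremum of radii with high $F$-density, which yields low density at every $r>r_x$ (in particular at $5r_x$, giving (i)) and, by continuity of $\mu$ from below along $B(x,r)\uparrow B(x,r_x)$, the closed inequality $\mu(B_i\cap F)\ge\tfrac12\mu(B_i)$ directly on $B_i$. That removes the extra contraction to $\tfrac58 B_i$ and gives the same constant $2c_\mu^3$. One small caveat about your closing remark: the paper's dyadic stopping rule does in fact work, because the ball it feeds to the covering theorem is the $1/5$-contraction of the low-density ball (so $5B_i$ has low density by construction) and the high-density ball is chosen three dyadic steps smaller so as to sit inside $B_i$; the pitfall you describe arises only if one uses the low-density ball itself in the covering theorem.
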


\begin{proof}
Let $x\in E$ be a density point of $E$. Since
$$
\mu(F\cap B(x,2r_0))
\leq
\mu(F)
\leq
\frac{1}{2}\mu(B_0)
\leq
\frac{1}{2}\mu (B(x,2r_0)),
$$
we have
$$
\mu(B(x,2r_0)\cap F) \leq \mu (B(x,2r_0)\setminus F)
$$
and since $x$ is a density point of $E$,
there exists a greatest integer $k$ so that $5B_x=B(x,2^{1-k}r_0)$
satisfies
$$
\mu(5B_x\cap F)\leq \mu(5B_x\setminus F).
$$
By the maximality of $k$ we also get
$$
\mu(B(x,2^{1-j}r_0)\cap F)> \mu(B(x,2^{1-j}r_0)\setminus F)
$$
for all $j > k$.
By a covering theorem for the balls $B_x$, there exists a
countable family of balls $\{B_i\}$ which satisfy the first condition
of the lemma. Since almost every point is a density point the
condition (ii) follows. For the
condition (iii), we observe that
\begin{eqnarray}
\sum_{i=1}^{\infty}\mu(5B_i)
\nonumber
&\leq&
c_{\mu}^3\sum_{i=1}^{\infty}\mu\left(\frac{5}{8}B_i\right)\\
\nonumber
&\leq&
2c_{\mu}^3\sum_{i=1}^{\infty}\mu\left(\frac{5}{8}B_i \cap F\right)
\\
\nonumber
&\leq&
2c_{\mu}^3\mu(F).
\end{eqnarray}
\end{proof}

\subsection{Functions of bounded mean oscillation}

In doubling metric measure space it is possible to characterize the
functions of bounded mean oscillation with a condition similar to that
of bounded mean tail oscillation. 
\begin{theorem}
\label{doublingcoveringlemma}
Let $f\in L^1_{loc}(X)$. Then
$f\in BMO(X)$ if and only if there exists $M\geq 0$ so that
$$
\int_{\{ x\in B: |f(x)-f_B| > \lambda\} }|f-f_B|d\mu
\leq
(\lambda + M)\mu (\{ x\in 3B: |f(x)-f_B| > \lambda\} )
$$
for all $\lambda \geq 0$ and for all balls $B\subset X$.
\end{theorem}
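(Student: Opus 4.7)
The plan is to prove both implications separately. The ``if'' direction is almost immediate: specializing the hypothesized inequality to $\lambda=0$ and using that the set $\{x\in 3B:|f(x)-f_B|>0\}$ lies inside $3B$, the doubling condition $\mu(3B)\leq c_\mu^2\mu(B)$ gives
$$\int_B|f-f_B|\,d\mu \leq M\mu(3B) \leq Mc_\mu^2\,\mu(B),$$
so $f\in BMO(X)$ with $\|f\|_{BMO}\leq Mc_\mu^2$.

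For the forward direction, fix $f\in BMO(X)$ with norm $m$, a ball $B$, and $\lambda\geq 0$, and set
$$E=\{x\in B:|f(x)-f_B|>\lambda\},\qquad F=\{x\in 3B:|f(x)-f_B|>\lambda\}.$$
Writing $|f-f_B|=\lambda+(|f-f_B|-\lambda)$ on $E$ and using $E\subset F$,
$$\int_E|f-f_B|\,d\mu=\lambda\mu(E)+\int_E(|f-f_B|-\lambda)\,d\mu\leq\lambda\mu(F)+\int_E(|f-f_B|-\lambda)\,d\mu,$$
so it suffices to bound the tail integral by a constant multiple of $m\mu(F)$.

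I would split into two cases. If $\mu(F)>\tfrac{1}{2}\mu(B)$, the bound is trivial from the $BMO$ estimate: $\int_E(|f-f_B|-\lambda)\,d\mu\leq\int_B|f-f_B|\,d\mu\leq m\mu(B)\leq 2m\mu(F)$. Otherwise, when $\mu(F)\leq\tfrac{1}{2}\mu(B)$, apply Lemma \ref{coveringlemma} with $B_0=B$ to obtain disjoint balls $B_i$ with $5B_i\subset 3B$, $\mu(5B_i\cap F)\leq\mu(5B_i\setminus F)$, $\mu\bigl(E\setminus\bigcup_i 5B_i\bigr)=0$, and $\sum_i\mu(5B_i)\leq c\mu(F)$. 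The crux is the estimate $|f_{5B_i}-f_B|\leq\lambda+2m$, which I would derive by averaging the triangle inequality $|f_{5B_i}-f_B|\leq|f_{5B_i}-f(x)|+|f(x)-f_B|$ over $x\in 5B_i\setminus F$, where $|f-f_B|\leq\lambda$, using $\mu(5B_i\setminus F)\geq\tfrac{1}{2}\mu(5B_i)$ together with the $BMO$ bound $\int_{5B_i}|f-f_{5B_i}|\,d\mu\leq m\mu(5B_i)$.

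Given this key estimate, for $x\in 5B_i\cap F$ one has $|f(x)-f_B|-\lambda\leq|f(x)-f_{5B_i}|+2m$, so
$$\int_{5B_i\cap F}(|f-f_B|-\lambda)\,d\mu\leq m\mu(5B_i)+2m\cdot\tfrac{1}{2}\mu(5B_i)=2m\mu(5B_i);$$
summing over $i$ (using $E\subset\bigcup_i 5B_i$ up to a null set) yields $\int_E(|f-f_B|-\lambda)\,d\mu\leq 2mc\mu(F)$. In either case, the required inequality then holds with $M=2mc$, where $c$ is the constant from Lemma \ref{coveringlemma}. The subtle point I anticipate is keeping the coefficient of $\lambda$ on the right-hand side equal to $1$, not $c$: a direct covering-lemma estimate of the full integrand $|f-f_B|$ would introduce that factor. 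Peeling off $\lambda\mu(E)\leq\lambda\mu(F)$ at the outset, and applying the covering only to the truncation $(|f-f_B|-\lambda)_+$, is what preserves the correct coefficient of $\lambda$.
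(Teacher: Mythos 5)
Your proof is correct, and it uses the same two core ingredients as the paper's proof: the Calder\'on--Zygmund covering lemma (Lemma \ref{coveringlemma}) applied to the pair $E_\lambda\subset B$, $F_\lambda\subset 3B$, and the observation that one should bound the truncated quantity $\int_E(|f-f_B|-\lambda)\,d\mu$ rather than the full integral, precisely so that the coefficient of $\lambda$ stays equal to $1$. Within this shared framework you organize the argument differently in two places, and both changes are improvements in clarity. First, instead of introducing the threshold $\lambda_0=2c_\mu^2(1+c_\mu^2)\|f\|_*$ (the paper's choice of $\lambda_0$ guarantees $\mu(F_\lambda)\leq\tfrac12\mu(B)$ for $\lambda\geq\lambda_0$, and the case $\lambda<\lambda_0$ requires a separate two-step estimate involving $E_{\lambda_0}$ and $F_{\lambda_0}$), you split directly on whether $\mu(F)\leq\tfrac12\mu(B)$, which is exactly the hypothesis of the covering lemma; the complementary case then dispatches in one line from the $BMO$ bound. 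Second, the paper handles the sum over the covering balls by separating the ``extra'' term $\sum_i\mu(5B_i\cap F_\lambda)\bigl(|(f-f_B)_{5B_i}|-\lambda\bigr)$, restricting to the positive indices $P$, transferring the weight from $5B_i\cap F_\lambda$ to $5B_i\setminus F_\lambda$ via the balance condition $\mu(5B_i\cap F)\leq\mu(5B_i\setminus F)$, and applying the reverse triangle inequality under the integral; you instead convert the same balance condition into the pointwise bound $|f_{5B_i}-f_B|\leq\lambda+2m$ by averaging over $5B_i\setminus F$, and then integrate once. The two arguments use identical inputs and give comparable constants, but your pointwise-bound route is more transparent. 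You also spell out the ``if'' direction ($\lambda=0$ plus doubling), which the paper leaves implicit. One small point worth making explicit in a writeup: the inclusion $5B_i\subset 3B$ that you use (so that $5B_i\setminus F$ really means $\{x\in 5B_i:|f(x)-f_B|\leq\lambda\}$) is true but is buried in the proof of Lemma \ref{coveringlemma} rather than stated in its conclusion; it follows because the dilated balls there have radius at most $2r_0$ and are centered at points of $E\subset B_0$.
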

\begin{proof}
Assume $f\in BMO(X)$ and write $\|f\|_*$ for the
$BMO$-norm of the function $f$.
Fix a ball $B$. Given $\lambda >0$ we write
$$
E_{\lambda}=\{ x\in B: |f(x)-f_B|>\lambda\}
$$
and
$$
F_{\lambda}=\{ x\in 3B: |f(x)-f_B|>\lambda\}.
$$
We have
$$
\mvint_{3B}|f-f_B|d\mu
\leq
\mvint_{3B}|f-f_{3B}|d\mu
+
|f_B-f_{3B}|
\leq
(1+c_{\mu}^2)\|f\|_*.
$$
Hence, 
$$
\mu(F_{\lambda})
\leq 
\int_{3B}\frac{|f-f_B|}{\lambda}d\mu
\leq
\frac{1}{2}\mu(B)
$$
whenever $\lambda \geq \lambda_0=2c_{\mu}^2(1+c_{\mu}^2)\|f\|_*$.
Fix any $\lambda > \lambda _0$.
Now we apply Lemma \ref{coveringlemma} to the sets $E_{\lambda}$ and
$F_{\lambda}$. We have
\begin{eqnarray}
\nonumber
\int_{E_{\lambda}}(|f-f_B|-\lambda )d\mu
&\leq&
\sum_{i=1}^{\infty}\int_{5B_i\cap F_{\lambda}}(|f-f_B|-\lambda)d\mu 
\\
\nonumber
&\leq&
\sum_{i=1}^{\infty}\int_{5B_i\cap F_{\lambda}}|f-f_{5B_i}|d\mu 
\\
\nonumber
& &
+ \sum_{i=1}^{\infty}\mu({5B_i\cap F_{\lambda}})(|(f-f_B)_{5B_i}|-\lambda).
\end{eqnarray}
Let $P$ be the set of indices for which the last sum has a
positive term. Then
\begin{eqnarray}
\nonumber
\sum_{i=1}^{\infty}\mu({5B_i\cap F_{\lambda}})(|(f-f_B)_{5B_i}|-\lambda)
&\leq &
\sum_{i\in P}\mu(
{5B_i \setminus F_{\lambda}}
)(|f_{B}-f_{5B_i}|-\lambda)
\\
\nonumber
&\leq&
\sum_{i=1}^{\infty}\int_{{5B_i \setminus F_{\lambda}}}(|f_{B}-f_{5B_i}|-|f-f_B|)d\mu
\\
\nonumber
&\leq&
\sum_{i=1}^{\infty}\int_{{5B_i \setminus F_{\lambda}}}|f-f_{5B_i}|d\mu .
\end{eqnarray}
This gives
\begin{eqnarray}
\int_{E_{\lambda}}(|f-f_B|-\lambda )d\mu
\nonumber
&\leq&
\sum_i\int_{5B_i}|f-f_{5B_i}|d\mu.
\\
\nonumber
&\leq&
\|f\|_*\sum_i\mu(5B_i)
\\
\nonumber
&\leq&
2c\|f\|_*\mu(F_{\lambda}).
\end{eqnarray}
The last inequality follows from 
Lemma \ref{coveringlemma}.
For $\lambda < \lambda _0$ we have
\begin{eqnarray}
\nonumber
\int_{E_{\lambda}}|f-f_B|d\mu
&=&
\int_{E_{\lambda _0}}|f-f_B|d\mu
+
\int_{E_{\lambda}\setminus E_{\lambda _0}}|f-f_B|d\mu
\\
\nonumber
&\leq&
(\lambda _0+c\|f\|_*)\mu (F_{\lambda_0})
+\lambda_0(\mu(E_{\lambda})-\mu(E_{\lambda _0}))
\\
\nonumber
&\leq&
(\lambda+2\lambda_0+c\|f\|_*)\mu(F_{\lambda}).
\end{eqnarray}
Now the theorem follows.
\end{proof}

\begin{remark}
If we assume
$$
\mu(F)\leq \frac{\mu (B_0)}{2c_{\mu}^k},
$$
then we have 
$$
\bigcup 5B_i \subset (1+2^{-k})B_0
$$ 
in the covering lemma.
Consequently, we have another version of the Theorem
\ref{doublingcoveringlemma}. Indeed, let $\rho >1$. Then
$f\in BMO(X)$ if and only if there exists $M\geq 0$ so that
$$
\int_{\{ x\in B: |f(x)-f_B| > \lambda\} }|f-f_B|d\mu
\leq
(\lambda + M)\mu (\{ x\in \rho B: |f(x)-f_B| > \lambda\} ).
$$
Rewriting the proof of the lemma with $\rho>1$ instead of constant $3$
shows that the bound $M$ in the final estimate blows up when $\rho$
approaches 1.
\end{remark}

\begin{remark}
In some metric spaces the above result can be sharpened. 
Indeed, if
every pair of points can be joined by a curve with a length as close
to their distance as wished, a similar argument as above shows that
$f\in BMO(X)$ if and only if there exists $M\geq 0$ so that
$$
\int_{\{ x\in B: |f(x)-f_B| > \lambda\} }|f-f_B|d\mu
\leq
(\lambda + M)\mu (\{ x\in  B: |f(x)-f_B| > \lambda\} ).
$$
In particular, this characterization is valid for doubling measures in $\Rn$.
\end{remark}

\subsection{The Weak $L^{\infty}$ and $BMO$}

The following theorem establishes the connection between the weak
$L^{\infty}(X)$ and $BMO(X)$. Briefly, every function of bounded mean
oscillation with a finite distribution function belongs to the weak
$L^{\infty}(X)$.

\begin{theorem}
\label{weakbmodoubling}
If $f\in BMO(X)$ and there exists $\alpha \geq 0$ so that
$d_{f,\mu}(\lambda)$ is finite for all $\lambda > \alpha$, then there
exists $M\geq0$ so that 
$$
\int_{\{x\in X:|f(x)|>\lambda\}}|f|d\mu \leq (\lambda + M)
\mu(\{x\in X:|f(x)|>\lambda\})
$$
for all $\lambda > \alpha$.
\end{theorem}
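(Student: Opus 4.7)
My plan is to adapt the argument in the proof of Theorem \ref{doublingcoveringlemma}, replacing the local super-level set $\{x\in B: |f-f_B|>\lambda\}$ by the global set $E_\lambda = \{x\in X: |f(x)|>\lambda\}$ and using $f$ itself in place of $f-f_B$. Fix $\lambda>\alpha$, so that $\mu(E_\lambda)<\infty$ by hypothesis.

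The first step is to produce a family $\{5B_i\}$ of balls covering $E_\lambda$ modulo a null set and satisfying $\mu(5B_i\cap E_\lambda)\leq \mu(5B_i\setminus E_\lambda)$ together with $\sum_i\mu(5B_i)\leq c\mu(E_\lambda)$, where $c$ depends only on the doubling constant. This is a global analog of Lemma \ref{coveringlemma}. I would build it by assigning to each density point $x\in E_\lambda$ the largest dyadic radius at which the ball $B(x,r)$ is still more than half filled by $E_\lambda$ --- such a radius exists because $\mu(E_\lambda)<\infty$ while $\mu(B(x,r))\to\infty$ as $r\to\infty$ when $\mu(X)=\infty$ (a minor modification handles the case $\mu(X)<\infty$) --- and then applying the $5r$-covering theorem exactly as in the proof of Lemma \ref{coveringlemma}.

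Once the cover is in hand the argument is short. Since $\mu(5B_i\setminus E_\lambda)\geq\tfrac{1}{2}\mu(5B_i)$ and $|f|\leq\lambda$ on $5B_i\setminus E_\lambda$, averaging the pointwise bound $|f_{5B_i}|\leq|f(x)|+|f(x)-f_{5B_i}|$ over $5B_i\setminus E_\lambda$ gives $|f_{5B_i}|\leq\lambda+2\|f\|_*$. Writing $|f|-\lambda\leq|f-f_{5B_i}|+(|f_{5B_i}|-\lambda)_+$ on $E_\lambda\cap 5B_i$, summing, and using the $BMO$ condition together with the two properties of the cover yields
\begin{align*}
\int_{E_\lambda}(|f|-\lambda)\,d\mu
&\leq \sum_i\int_{5B_i}|f-f_{5B_i}|\,d\mu + 2\|f\|_*\sum_i\mu(5B_i)\\
&\leq 3c\|f\|_*\mu(E_\lambda),
\end{align*}
which is the desired inequality with $M=3c\|f\|_*$.

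The main obstacle is the first step. Lemma \ref{coveringlemma} is formulated locally around a fixed initial ball $B_0$, and one must verify that the stated covering properties still hold for the possibly unbounded finite-measure set $E_\lambda$, taking care of whether $\mu(X)$ is finite or infinite. All the remaining estimates are direct analogs of computations already carried out in the proof of Theorem \ref{doublingcoveringlemma}.
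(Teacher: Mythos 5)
Your second step (the estimate $|f_{5B_i}|\leq\lambda+2\|f\|_*$ and the summation over the covering) is correct and in fact a little cleaner than the paper's version, which splits the sum over indices with a positive term but lands on the same bound of order $\|f\|_*\mu(E_\lambda)$. The gap is exactly where you flag it, and it is more than a ``verification.'' First, the $5r$-covering theorem you invoke requires a family of balls with \emph{uniformly bounded radii}; in Lemma \ref{coveringlemma} this is automatic because everything lives inside $3B_0$, but your global stopping-time construction only delivers a uniform bound on the \emph{measure} $\mu(5B_x)$ in terms of $\mu(E_\lambda)$ (because a fixed fraction of $5B_x$ is more than half filled), and in a general doubling metric measure space a uniform bound on the measure of balls does not bound their radii. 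Without it the covering theorem can fail: the nested family $\{B(0,n)\}_{n\in\N}$ in $\R$ admits no disjoint subfamily whose $5$-dilates cover $\R$. Second, when $\mu(X)<\infty$ and $\lambda$ is small you may have $\mu(E_\lambda)>\tfrac12\mu(X)$, and then no ball centred at a density point of $E_\lambda$ is ever at most half filled, so your stopping radius simply does not exist; repairing this requires the same two-regime argument (first $\lambda\geq\lambda_0$ with $\mu(E_{\lambda_0})\leq\tfrac14\mu(X)$, then $0\leq\lambda<\lambda_0$) used at the end of the proof of Theorem \ref{doublingcoveringlemma}, which is not a minor modification.

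The paper avoids both obstructions by \emph{not} globalizing the covering lemma. It fixes $x_0\in X$, sets $E_k^\lambda=\{x\in B(x_0,k):f>\lambda\}$ and $F_k^\lambda=E_{3k}^\lambda$, and applies Lemma \ref{coveringlemma} verbatim with $B_0=B(x_0,k)$; the covering balls then all lie in $B(x_0,3k)$, so their radii are automatically bounded, and the hypothesis $\mu(F_k^\lambda)\leq\tfrac12\mu(B(x_0,k))$ holds once $k$ is large because $\mu(F_k^\lambda)\leq d_{f,\mu}(\lambda)<\infty$ while $\mu(B(x_0,k))\to\infty$ when $\mu(X)=\infty$ (and when $\mu(X)<\infty$ one insists $\lambda\geq\lambda_0$). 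Running your second step inside $B(x_0,k)$ yields $\int_{E_k^\lambda}(f-\lambda)\,d\mu\leq c\|f\|_*\mu(F_k^\lambda)$, and letting $k\to\infty$ by monotone convergence finishes. That is the natural way to repair your plan: keep the covering lemma local, exhaust $X$ by concentric balls, and pass to the limit.
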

\begin{proof}
Let $f\in BMO(X)$. We may assume $f$ is positive since $|f|$ is also
in $BMO(X)$ with a norm at most twice that of $f$. 

We split the proof in two cases according to the total mass of the space
$X$. First we suppose $\mu (X)=\infty$. Let $\epsilon >0$ and consider
$\lambda \geq \alpha + \epsilon$, 
where $\alpha\geq 0$ is given by the assumption. Fix a point $x_0 \in
X$ and define
$$
E_k^{\lambda} = \{ x\in B(x_0,k): f(x)>\lambda \}
$$
and $F_k^{\lambda}=E_{3k}^{\lambda}$.
Since $d_{f,\mu}(\alpha + \epsilon)$ is finite,
$$
\lim_{k\to \infty}\frac{\mu(F_k^{\lambda})}{\mu(B(x_0,k))} = 0
$$
and consequently there exists an integer $N_{\epsilon}$ so that the
above quotion is at
most $\frac{1}{2}$ for any $\lambda \geq \alpha + \epsilon$ whenever
$k>N_{\epsilon}$. Fix some
$k>N_{\epsilon}$,
apply Lemma \ref{coveringlemma} to the set
$F_k^{\lambda}$ and obtain a collection of balls $\{B_i\}$. This implies
\begin{eqnarray}
\nonumber
& &
\int_{E_k^{\lambda}}(f-\lambda)d\mu
\\
\nonumber
&\leq&
\sum_{i=1}^{\infty}
\left(
\int_{5B_i\cap F_k^{\lambda}}|f-f_{5B_i}|d\mu
+
\mu(5B_i\cap F_k^{\lambda})(f_{5B_i}-\lambda)
\right).
\end{eqnarray}
Similar to the proof of Theorem \ref{doublingcoveringlemma} we
estimate the last term
in the summand with
$$
\mu(5B_i\cap F_k^{\lambda})(f_{5B_i}-\lambda)
\leq
\int_{5B_i\setminus F_k^{\lambda}}|f-f_{5B_i}|d\mu
$$
and hence
$$
\int_{E_k^{\lambda}}(f-\lambda)d\mu
\leq
\sum_{i=1}^{\infty}
\int_{5B_i}|f-f_{5B_i}|d\mu
\leq
c\|f\|_*\mu(F_k^{\lambda}).
$$
By monotone convergence theorem the above inequality may
be passed to the limit (w.r.t. $k$) 
and the theorem follows since the conclusion is
independent of $\epsilon$.

Let us then consider the case $\mu(X)<\infty$. Then $\alpha$ may be
discarded since all sets in $X$ have finite measure. Write
$$
E^{\lambda} = \{x\in X: f(x)>\lambda\}.
$$
Since $f$ is finite $\mu$ almost everywhere, $\mu(E^{\lambda})$
approaches zero as $\lambda$ tends to infinity. Hence we may pick a
$\lambda _0$ so that $\mu(E^{\lambda})\leq \frac{1}{4}\mu(X)$ for any
$\lambda \geq \lambda _0$.
Since $\mu (B(x_0,k))\to \mu(X)$ as $k$ tends to infinity, we have
$N>0$ so that 
$$
\frac{\mu(E_k^{\lambda})}{\mu(B(x_0,k))}\leq \frac{1}{2}
$$
for all $\lambda \geq \lambda_0$ and $k> N$. As above we apply
Lemma \ref{coveringlemma} and monotone convergence theorem. We have
$$
\int_{E^{\lambda}}(f-\lambda)\leq c\|f\|_*d_{f,\mu}(\lambda)
$$
for all $\lambda \geq \lambda_0$. The case of small $\lambda$ is
treated as in the end of the proof of the 
Theorem \ref{doublingcoveringlemma}.
\end{proof}

The preceding theorem shows that any function of bounded mean
oscillation belongs to the $L^{\infty}_w$ as soon as the the measure
space is doubling. The converse is not true in general since the local
behaviour of the function in $L^{\infty}_w$ is not controlled for.

In a Euclidean cube, equipped with the Lebesgue measure, there is
a converse result stating that 
every function in the weak $L^{\infty}$ is
equimeasurable to some function of bounded mean oscillation
\cite{BDS}. 
However, if the
measure is changed, the result is no longer true as the next
example shows.
\begin{lemma}
There exists a doubling metric measure space $(X,\mu)$ and a function
$f\in L^{\infty}_w(X)$ so that no function $g$ defined on $X$,
equimeasurable to $f$, belongs to $BMO(X)$.
\end{lemma}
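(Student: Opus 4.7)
The plan is to construct a doubling metric measure space $(X,d,\mu)$ whose ball structure is geometrically rigid enough that the distribution functions realised by $\text{BMO}(X)$-functions form a proper subset of those admitted by condition~(iv) of Theorem~\ref{theorem1}, and then to exhibit a concrete $f\in L^{\infty}_w(X)$ whose distribution function falls outside this subset. In contrast to the Euclidean cube, where Bennett--DeVore--Sharpley show every admissible distribution can be realised by a BMO function, the spaces we seek have insufficient ``rearrangement flexibility''.

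\textbf{Step 1 (the space).} I would take $X$ to be a discrete ultrametric space with carefully chosen weighted counting measure: for example $X=\{x_n\}_{n\geq 0}$ with atoms $\mu(\{x_0\})=1$ and $\mu(\{x_n\})=2^{-n}$ for $n\geq 1$, equipped with an ultrametric such that the only non-trivial balls are the tails $T_k=\{x_k,x_{k+1},\dots\}$ for $k\geq 1$ and the full space $X$. Doubling holds with constant $2$.

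\textbf{Step 2 (the function).} Prescribe values $\{f(x_n)\}$ so that the jumps of $d_{f,\mu}$ occur at pairwise distinct levels and with the pairwise distinct sizes dictated by the atom masses. Verify condition~(iv) of Theorem~\ref{theorem1} by direct computation of $d_{f,\mu}$, obtaining $f\in L^{\infty}_w(X)$.

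\textbf{Step 3 (rigidity of rearrangements).} Any $g$ equimeasurable to $f$ must reproduce the jump pattern of $d_{f,\mu}$. Since the atom masses in $X$ are $\{1\}\cup\{2^{-n}:n\geq 1\}$, a subset of $X$ has mass exactly $2^{-n}$ only if it is $\{x_n\}$ or the tail $T_{n+1}$, and mass exactly $1$ only if it is $\{x_0\}$ or $T_1$. A short case analysis rules out all but one ``geometric-tail'' substitution and forces $|g(x_n)|=|f(x_n)|$ at every atom up to a finite combinatorial ambiguity.

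\textbf{Step 4 (failure of BMO).} For each admissible placement of the values, estimate $\mvint_B|g-g_B|\,d\mu$ on a well-chosen ball $B$ (typically $X$ itself or a large tail $T_k$) and show the local oscillation cannot be uniformly bounded. With the values $f(x_n)$ chosen so that a single atom of mass $1$ carries a disproportionately large value, the mean $g_B$ on any ball containing that atom is pulled towards that value, while the bulk of $B$ consists of atoms whose values are comparatively small, producing oscillation that blows up as $B$ varies.

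The main obstacle is reconciling Steps~2 and~4 simultaneously: the values $f(x_n)$ must grow slowly enough for condition~(iv) to hold, yet rapidly enough that no placement permitted by Step~3 yields a bounded mean oscillation on every ball. The interplay between the geometric atom weights $2^{-n}$ and the chosen values of $f$ is delicate, and one must control the single allowed ``geometric-tail'' substitution of Step~3 by exhibiting, for each of the (essentially finitely many) admissible rearrangements, at least one ball on which the oscillation of $g$ is uncontrolled.
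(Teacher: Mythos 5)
Your proposal follows the same line as the paper --- a discrete space with dyadic atom masses $\{2^{-n}\}$, so that the only subsets of a given mass are a single atom or a tail, making equimeasurability essentially rigid --- and Step~3 correctly extracts $|g(x_n)|=|f(x_n)|$ for every equimeasurable $g$. But this determines only $|g|$, not the signs of $g$, and here the argument breaks down. The all-positive rearrangement $g=|f|$ is always admissible, and whenever $|f(x_n)|$ is eventually monotone in $n$ with at most linear growth (which Theorem~\ref{theorem1}(iv) forces if $f$ is to lie in $L^{\infty}_w$), this $g$ is a discrete-logarithm--type function and lies in $BMO(X)$. Concretely, if $|f(x_n)|=n$, then on any tail $T_a$ in your ultrametric space one computes $g_{T_a}=a+1$ and $\mvint_{T_a}|g-g_{T_a}|\,d\mu=1$; likewise on the paper's space $X=\{2^{-k}\}$ with $f(x_k)=(-1)^kk$, the function $g(x_k)=k$ satisfies $\mvint_B|g-g_B|\,d\mu\leq 1$ on every ball. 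Such a $g$ is equimeasurable with $f$ and is in $BMO(X)$, so Step~4 cannot be completed. (This same sign-freedom is a gap in the paper's own proof: the assertion that $f$ is the only function equimeasurable with $f$ is not correct, since $|f|$ is another, and it is in BMO.)

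To make a construction of this type work, the absolute values $|f(x_n)|$ must themselves oscillate with unbounded amplitude: one wants $\big|\,|f(x_n)|-|f(x_{n+1})|\,\big|\to\infty$, so that every sign choice for $g$ inherits unbounded jumps between the two heaviest atoms of each tail $T_a$, while keeping $|f(x_n)|=O(n)$ so that condition~(iv) of Theorem~\ref{theorem1} still holds (for instance $|f(x_n)|=n$ for even $n$ and $|f(x_n)|=3n$ for odd $n$). Your Step~4 heuristic of placing a single disproportionately large value on the heavy atom $x_0$ does not achieve this: in your ultrametric space $x_0$ belongs to only one nontrivial ball, namely $X$ itself, so a single outlier raises the BMO norm of the monotone rearrangement by a fixed finite amount but never produces a sequence of balls whose oscillation tends to infinity.
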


\begin{proof}
We define $X\subset \R$ to be the countable collection of points
$$
X=\{x_k\}_{k=0}^{\infty}=\{2^{-k}\}_{k=0}^{\infty}
$$
with the Euclidean distance. We set
$$
\mu(x_k)=2^{-k}
$$
for every $x_k\in X$. The metric measure space $(X,\mu)$ is doubling
with a doubling constant $c_{\mu}=4$ and $\mu(X)=2$. Let us now define
$$
f(x_k)= (-1)^kk.
$$
and fix $\lambda>0$.
Then
$$
\int_{\{x\in X:|f(x)|>\lambda\}}|f|d\mu
\leq
(\lambda+2)\mu(\{x\in X:|f(x)|>\lambda\}).
$$
and hence $f \in L^{\infty}(X)$. Nevertheless, $f$ is not of bounded
mean oscillation. This can be seen by considering the balls 
$$
B_k=B\left(x_{2k},5\cdot 2^{-2k-3}\right)
$$
since now
$$
\mvint_{B_k}|f-f_{B_k}|d\mu = \frac{16k+10}{27}
$$
which blows up as $k$ grows.
Observe that because of the special structure of the space, $f$ is the only function defined on $X$ which is equimeasurable to $f$. This proves the lemma.
\end{proof}

\begin{remark}
The Hardy-Littlewood maximal operater is bounded
in $BMO$ for doubling metric measure spaces \cite{AK}, for Euclidean
case see \cite{BDS} and \cite{CF}. The argument in \cite{BDS}
showing that maximal operator preserves the weak $L^{\infty}$ in the 
Euclidean case, depend only
on the weak and strong type estimates of the maximal operator.
Since these estimates are available also in doubling metric measure
spaces, we can conclude that the maximal operator is bounded in
$L^{\infty}_w(X)$ as well.
\end{remark}

\section{$BMO$ and the weak $L^{\infty}$ for non-doubling measures}

In this section we show that the arguments of the previous section
can be generalized to some non-doubling measures in
Euclidean spaces.
Indeed, we study positive Radon measures $\mu$ for which no
hyperplane $L$, orthogonal to one of the coordinate axes, contains
mass. These measures are not rare since for every nonnegative Radon
measure for which $\mu(p)=0$ at every point $p\in \Rn$, there exists
an orthonormal system so that the above mentioned hyperplane condition
is satisfied (for further details, see \cite{MMNO}). In this case Lemma
\ref{coveringlemma} is replaced by the following result.
\begin{lemma}
Let $\mu$ be a positive Radon measure in $\Rn$ such that for every
hyperplane $L$, orthogonal to one of the coordinate axes, $\mu(L)=0$.
Let $E$ be a subset of $\Rn$. Suppose $E$ is contained in a cube $Q_0$
with sides parallel to the coordinate axes, and suppose that 
$
\mu(E)\leq \frac{1}{2}\mu(Q_0).
$
Then there exists a sequence $\{Q_i\}$ of cubes with sides parallel to
the coordinate axes and contained in $Q_0$ such that 
\begin{eqnarray}
&(i)&\mu(Q_i\cap E)\leq \mu(Q_i\setminus E)\nonumber \qquad \qquad\\
&(ii)&\mu\left(E\setminus \bigcup_{i=1}^{\infty}Q_i\right)=0, \nonumber \\
\nonumber
&(iii)&\sum_{i=1}^{\infty} \mu (Q_i) \leq c(n) \mu (E),
\end{eqnarray}
with $c(n)$ depending only on the dimension of $\Rn$.
\end{lemma}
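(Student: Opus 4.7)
The plan is to mirror the proof of Lemma \ref{coveringlemma}, with axis-parallel cubes in place of balls and a Besicovitch-type selection in place of the doubling-based Vitali argument. The hyperplane hypothesis substitutes for the doubling condition: it forces $\mu(Q(a,\ell))$ and $\mu(Q(a,\ell)\cap E)$ to depend continuously on the corner $a$ and side $\ell$ of the cube $Q(a,\ell) = \prod_i [a_i, a_i+\ell]$, because a thin slab about any axis-orthogonal hyperplane has arbitrarily small $\mu$-measure.

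First, for $\mu$-almost every $x \in E$ (density points of $E$ in the interior of $Q_0$, which is almost all of $E$ since $\mu(\partial Q_0) = 0$), I would construct an axis-parallel cube $Q_x \subset Q_0$ with $x \in Q_x$ and $\mu(Q_x\cap E) = \tfrac12\mu(Q_x)$. A small cube centered at $x$ and contained in $Q_0$ has density of $E$ close to $1$, while $Q_0$ itself has density at most $\tfrac12$ by hypothesis. Linearly interpolating the corner and side between these endpoint cubes gives a continuous path of cubes in $Q_0$, each containing $x$, along which $\phi(t) = \mu(Q(a(t),\ell(t))\cap E) - \tfrac12\mu(Q(a(t),\ell(t)))$ is continuous and changes sign; the intermediate value theorem then supplies $Q_x$, with (i) holding at equality.

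Next, I would invoke a Besicovitch-type covering theorem for axis-parallel cubes with a distinguished anchor: since the cubes have uniformly bounded diameter, there exist $N(n)$ pairwise disjoint subfamilies $\mathcal{G}_1, \ldots, \mathcal{G}_{N(n)}$ of $\{Q_x\}_{x \in S}$ whose union covers the set $S$ of density points. Setting $\{Q_i\} = \bigcup_k \mathcal{G}_k$ then gives (ii), because $\mu(E \setminus S) = 0$. Within each $\mathcal{G}_k$ the cubes are disjoint and satisfy $\mu(Q) \leq 2\mu(Q \cap E)$ by (i), so $\sum_{Q \in \mathcal{G}_k} \mu(Q) \leq 2\mu(E)$; summing over $k$ yields (iii) with $c(n) = 2N(n)$.

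The main obstacle is the covering step: the doubling-based Vitali argument fails, so one must rely on a Besicovitch-type theorem for axis-parallel cubes anchored (but not necessarily centered) at the defining points. This is most delicate when a density point $x$ lies near $\partial Q_0$, so that no cube centered at $x$ and contained in $Q_0$ can have density $\leq \tfrac12$. If only the centered version of Besicovitch is directly available, I would pass to the auxiliary cubes $\widehat Q_x$, the smallest cubes centered at $x$ containing $Q_x$ (of side at most $2\,\ell(Q_x)$), apply centered Besicovitch to $\{\widehat Q_x\}$, and inherit disjointness for the corresponding $Q_x$'s; verifying coverage of $S$ by these smaller $Q_x$'s then requires a supplementary density-point argument.
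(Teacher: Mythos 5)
The paper does not actually prove this lemma; it is quoted from Mateu--Mattila--Nicolau--Orobitg \cite{MMNO}, so there is no in-paper proof to compare against. Assessed on its own, your proposal has a genuine gap precisely at the step you flag as ``the main obstacle.''

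Your first step is fine in spirit: the hyperplane condition gives $\mu(\partial Q)=0$ for every axis-parallel cube, hence continuity of $t\mapsto \mu(Q(a(t),\ell(t)))$ and of $t\mapsto \mu(Q(a(t),\ell(t))\cap E)$, and the containment and ``$x\in Q$'' constraints are linear in $(a,\ell)$ so they persist along a segment; the intermediate value theorem then produces $Q_x\subset Q_0$ with $x\in Q_x$ and $\mu(Q_x\cap E)=\tfrac12\mu(Q_x)$ for $\mu$-a.e.\ $x\in E$. (One should also say a word about why $\mu(Q_x)>0$, but this is minor.)

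The second step, however, is not justified, and I do not think it can be justified as stated. You invoke a ``Besicovitch-type covering theorem for axis-parallel cubes with a distinguished anchor'' where the anchor $x$ is only assumed to lie somewhere in $Q_x$ (possibly at a corner, which is unavoidable when $x$ is near $\partial Q_0$). The classical Besicovitch/Morse covering theorems require the tagged point to sit at a bounded relative distance from the center (Morse's ``regularity'' parameter); when the tag is allowed to be on the boundary that hypothesis fails, and the theorem is not a standard result one can simply cite. Your suggested repair --- apply centered Besicovitch to the doubled cubes $\widehat Q_x\supset Q_x$ --- does give bounded overlap for the $\widehat Q_{x_i}$ and hence for the smaller $Q_{x_i}$, but it only guarantees that the $\widehat Q_{x_i}$ cover $S$, not that the $Q_{x_i}$ do; that is exactly what is needed for condition (ii), and the ``supplementary density-point argument'' you gesture at is left entirely unspecified. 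Since the cover $\{Q_x\}$ assigns a single cube to each point and is not a fine (Vitali) cover of $E$, one cannot fall back on the Vitali--Besicovitch theorem either. Until this covering step is either proved or replaced (for instance by a dyadic stopping-time construction adapted to the measure, which is closer to what is done in \cite{MMNO}), the proposal does not establish conditions (ii) and (iii).
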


Following exactly the same line of arguments as in the proofs of
Theorems \ref{doublingcoveringlemma} and \ref{weakbmodoubling} we
obtain the following results. 
\begin{theorem}
Let $\mu$ be a positive Radon measure in $\Rn$ such that for every
hyperplane $L$, orthogonal to the coordinate axes, $\mu (L)=0$. Then
$f\in BMO(\mu)$ if and only if there exists $M\geq 0$ such that
$$
\int_{\{x\in Q:|f(x)-f_Q|>\lambda\}}|f-f_Q|d\mu 
\leq 
(\lambda + M)
\mu(\{
x\in Q:|f(x)-f_Q|>\lambda
\})
$$
for all cubes $Q$ and $\lambda \geq 0$.
\end{theorem}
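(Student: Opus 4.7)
The plan is to mimic the argument in Theorem \ref{doublingcoveringlemma} almost verbatim, but to replace the metric covering lemma with the cube-based covering lemma for the Radon measure $\mu$ stated above, and to take advantage of the fact that the cubes $Q_i$ produced by that lemma are contained in $Q_0$ itself (no dilation is needed), so that $E_\lambda$ and its ``inflated'' version coincide. I would define $BMO(\mu)$ using cubes with sides parallel to the coordinate axes, as is natural in this setting.

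The easy implication is immediate: applying the stated condition with $\lambda=0$ yields $\int_Q|f-f_Q|\,d\mu\leq M\mu(Q)$ for every such cube $Q$, so $f\in BMO(\mu)$ with $\|f\|_*\leq M$.

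For the converse, assume $f\in BMO(\mu)$ and fix a cube $Q$. Write $E_\lambda=\{x\in Q:|f(x)-f_Q|>\lambda\}$. By Chebyshev's inequality,
\begin{equation}
\nonumber
\mu(E_\lambda)\leq \frac{1}{\lambda}\int_Q|f-f_Q|\,d\mu\leq \frac{\|f\|_*\mu(Q)}{\lambda},
\end{equation}
so $\mu(E_\lambda)\leq \tfrac{1}{2}\mu(Q)$ for every $\lambda\geq \lambda_0:=2\|f\|_*$. For $\lambda\geq\lambda_0$ I would apply the new covering lemma (with $Q_0=Q$ and $E=E_\lambda$) to obtain a countable family of cubes $\{Q_i\}$ contained in $Q$ which satisfy properties (i)--(iii). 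Splitting $|f-f_Q|\leq |f-f_{Q_i}|+|f_{Q_i}-f_Q|$ and using that $\mu(E_\lambda\setminus\bigcup Q_i)=0$, one estimates
\begin{equation}
\nonumber
\int_{E_\lambda}(|f-f_Q|-\lambda)\,d\mu\leq \sum_i\int_{Q_i\cap E_\lambda}|f-f_{Q_i}|\,d\mu + \sum_i\mu(Q_i\cap E_\lambda)(|f_{Q_i}-f_Q|-\lambda).
\end{equation}
For indices with a positive second term, property (i) lets me replace $\mu(Q_i\cap E_\lambda)$ by $\mu(Q_i\setminus E_\lambda)$; on $Q_i\setminus E_\lambda$ one has $|f-f_Q|\leq \lambda$, so the bound $|f_{Q_i}-f_Q|-\lambda\leq|f-f_{Q_i}|$ follows by the triangle inequality. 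This converts the second sum into an integral of $|f-f_{Q_i}|$ over $Q_i\setminus E_\lambda$. Combining with the first sum gives
\begin{equation}
\nonumber
\int_{E_\lambda}(|f-f_Q|-\lambda)\,d\mu\leq \sum_i\int_{Q_i}|f-f_{Q_i}|\,d\mu\leq \|f\|_*\sum_i\mu(Q_i)\leq c(n)\|f\|_*\mu(E_\lambda),
\end{equation}
which is the required inequality with $M=c(n)\|f\|_*$ for $\lambda\geq\lambda_0$. The range $0\leq\lambda<\lambda_0$ is handled exactly as at the end of the proof of Theorem \ref{doublingcoveringlemma}, by splitting the integral at the threshold $\lambda_0$ and absorbing the finite correction $2\lambda_0$ into the constant.

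The genuine difference from the doubling case lies not in the algebra but in having the covering lemma available: once the sub-cubes $Q_i$ sit inside $Q$ itself and satisfy (i)--(iii), the rest of the argument is formal. Hence the main point to verify carefully is that the hypothesis $\mu(L)=0$ for coordinate hyperplanes indeed makes the covering lemma applicable to our $E_\lambda\subset Q$, since that is what ensures continuity of $\mu(Q(x,r))$ in $r$ and the existence of a maximal stopping time in the selection procedure; the rest is routine.
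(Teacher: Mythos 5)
Your proof is correct and follows exactly the approach the paper intends: the paper explicitly defers to the arguments of Theorem \ref{doublingcoveringlemma}, now run with the cube-based covering lemma, and you fill in those details faithfully (Chebyshev to reach the threshold $\lambda_0$, the stopping-time cubes, the triangle-inequality splitting, and the handling of small $\lambda$ by splitting at $\lambda_0$). Your observation that the cubes $Q_i$ lie inside $Q_0$ itself — so no enlarged cube $\rho Q$ is needed, unlike the $3B$ in the doubling case — is precisely why the statement here has $E_\lambda$ on both sides, and is consistent with the paper's remark discussing $\rho>1$.
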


\begin{theorem}
Let $\mu$ be a positive Radon measure in $\Rn$ such that for every
hyperplane $L$, orthogonal to the coordinate axes, $\mu (L)=0$.
If $f\in BMO(\mu)$ and there exists $\alpha \geq 0$ so that
$d_{f,\mu}(\lambda)$ is finite for all $\lambda > \alpha$, 
then $f\in L^{\infty}_w(\mu)$. 
\end{theorem}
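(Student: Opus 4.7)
The plan is to transpose the proof of Theorem \ref{weakbmodoubling} to the non-doubling Euclidean setting, replacing balls by coordinate-axis-parallel cubes and invoking the covering lemma just stated. As in that proof, I first reduce to $f\geq 0$, since $|f|\in BMO(\mu)$ with norm at most twice that of $f$ and with the same distribution function on level sets $\{|f|>\lambda\}$ up to the hypothesis on $\alpha$. The target is the integral estimate
\[
\int_{\{f>\lambda\}} (f-\lambda)\,d\mu \leq c(n)\|f\|_*\,\mu(\{f>\lambda\})
\]
for every $\lambda$ above some threshold, which on rearrangement is condition (ii) of Theorem \ref{theorem1} and hence gives $f\in L^\infty_w(\mu)$.

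I would split into two cases according to whether $\mu(\Rn)$ is infinite or finite. Either way, I pick a large closed axis-parallel cube $Q_k$ centred at the origin so that
\[
E_k^\lambda := \{x\in Q_k : f(x)>\lambda\}
\]
satisfies $\mu(E_k^\lambda)\leq \tfrac{1}{2}\mu(Q_k)$. When $\mu(\Rn)=\infty$, this uses that $\mu(Q_k)\to\infty$ as $Q_k\uparrow \Rn$ together with $d_{f,\mu}(\lambda)<\infty$ for $\lambda>\alpha$; when $\mu(\Rn)<\infty$, I argue exactly as at the end of Theorem \ref{weakbmodoubling}: first restrict attention to $\lambda\geq\lambda_0$ for which $\mu(\{f>\lambda\})\leq\tfrac{1}{4}\mu(\Rn)$, then choose $k$ so that $\mu(Q_k)\geq\tfrac{3}{4}\mu(\Rn)$. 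The covering lemma then furnishes disjoint cubes $\{Q_i\}\subset Q_k$ with (i)~$\mu(Q_i\cap E_k^\lambda)\leq\mu(Q_i\setminus E_k^\lambda)$, (ii)~$\bigcup Q_i$ covers $E_k^\lambda$ up to measure zero, and (iii)~$\sum_i\mu(Q_i)\leq c(n)\mu(E_k^\lambda)$.

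The core estimate is the telescoping argument already used for Theorem \ref{doublingcoveringlemma}, but cleaner because the lemma produces a direct cover with no dilation. Splitting
\[
\int_{Q_i\cap E_k^\lambda}(f-\lambda)\,d\mu = \int_{Q_i\cap E_k^\lambda}(f-f_{Q_i})\,d\mu + \mu(Q_i\cap E_k^\lambda)(f_{Q_i}-\lambda),
\]
and noting that when $f_{Q_i}>\lambda$ one has $|f-f_{Q_i}|\geq f_{Q_i}-\lambda$ on $Q_i\setminus E_k^\lambda$, property (i) yields $\mu(Q_i\cap E_k^\lambda)(f_{Q_i}-\lambda)\leq\int_{Q_i\setminus E_k^\lambda}|f-f_{Q_i}|\,d\mu$. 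Summing over $i$ and using (iii) gives
\[
\int_{E_k^\lambda}(f-\lambda)\,d\mu \leq \sum_i\int_{Q_i}|f-f_{Q_i}|\,d\mu \leq \|f\|_*\sum_i\mu(Q_i) \leq c(n)\|f\|_*\,\mu(E_k^\lambda).
\]
Letting $k\to\infty$, monotone convergence delivers the desired inequality for all $\lambda$ above the chosen threshold, and the remaining range of small $\lambda$ is absorbed into the constant $M$ precisely as in the closing lines of Theorem \ref{doublingcoveringlemma}.

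The chief obstacle is not the $BMO$ computation, which is essentially parallel to the doubling case, but arranging the exhausting-cube step: one needs $\mu(Q_k)\to\mu(\Rn)$, which holds because $\mu$ is Radon and $Q_k\uparrow\Rn$, together with finiteness of $d_{f,\mu}(\lambda)$, so that the covering lemma's hypothesis $\mu(E_k^\lambda)\leq\tfrac{1}{2}\mu(Q_k)$ is met uniformly on the range of $\lambda$ being handled. The hyperplane assumption on $\mu$ enters only implicitly, through the statement of the covering lemma.
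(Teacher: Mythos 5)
Your proposal is correct and follows exactly the route the paper intends: the paper itself supplies no separate proof but simply points to Theorems \ref{doublingcoveringlemma} and \ref{weakbmodoubling} with cubes in place of balls and the non-doubling covering lemma in place of Lemma \ref{coveringlemma}. You have accurately transcribed that argument, correctly observed that the absence of dilation makes the auxiliary set $F_k^\lambda$ over the enlarged ball unnecessary, handled both the infinite and finite total-mass cases as in Theorem \ref{weakbmodoubling}, and closed the argument via monotone convergence and condition (ii) of Theorem \ref{theorem1}.
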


Whether there exists a converse, i.e. if for every $f\in
L^{\infty}_w(\mu)$ there exists an equimeasurable function of bounded
mean oscillation, is not clear. Observe that the counterexample in the
previous section was singular with respect to Lebesgue measure 
and does not satisfy the hyperplane condition.

\bibliographystyle{acm}
\bibliography{newbiblio}

\def\cprime{$'$}
\begin{thebibliography}{10}

\bibitem{AK}
{\sc Aalto, D., and Kinnunen, J.}
\newblock The discrete maximal operator in metric spaces.
\newblock {\em To appear in J. Anal. Math\/}.

\bibitem{BMR}
{\sc Bastero, J., Milman, M., and Ruiz, B. F.~J.}
\newblock A note on {$L(\infty,q)$} spaces and {S}obolev embeddings.
\newblock {\em Indiana Univ. Math. J. 52}, 5 (2003), 1215--1230.

\bibitem{BDS}
{\sc Bennett, C., DeVore, R.~A., and Sharpley, R.}
\newblock Weak-{$L^{\infty }$} and {BMO}.
\newblock {\em Ann. of Math. (2) 113}, 3 (1981), 601--611.

\bibitem{BS}
{\sc Bennett, C., and Sharpley, R.}
\newblock {\em Interpolation of operators}, vol.~129 of {\em Pure and Applied
  Mathematics}.
\newblock Academic Press Inc., Boston, MA, 1988.

\bibitem{CF}
{\sc Chiarenza, F., and Frasca, M.}
\newblock Morrey spaces and {H}ardy-{L}ittlewood maximal function.
\newblock {\em Rend. Mat. Appl. (7) 7}, 3-4 (1987), 273--279 (1988).

\bibitem{CW}
{\sc Coifman, R.~R., and Weiss, G.}
\newblock {\em Analyse harmonique non-commutative sur certains espaces
  homog\`enes}.
\newblock Lecture Notes in Mathematics, Vol. 242. Springer-Verlag, Berlin,
  1971.
\newblock {\'E}tude de certaines int{\'e}grales singuli{\`e}res.

\bibitem{H}
{\sc Heinonen, J.}
\newblock {\em Lectures on analysis on metric spaces}.
\newblock Universitext. Springer-Verlag, New York, 2001.

\bibitem{Ko}
{\sc Korenovskii, A.}
\newblock {\em Mean oscillations and equimeasurable rearrangements of
  functions}, vol.~4 of {\em Lecture Notes of the Unione Matematica Italiana}.
\newblock Springer, Berlin, 2007.

\bibitem{MP}
{\sc MacManus, P., and P{\'e}rez, C.}
\newblock Generalized {P}oincar\'e inequalities: sharp self-improving
  properties.
\newblock {\em Internat. Math. Res. Notices}, 2 (1998), 101--116.

\bibitem{MaPi}
{\sc Mal{\'y}, J., and Pick, L.}
\newblock An elementary proof of sharp {S}obolev embeddings.
\newblock {\em Proc. Amer. Math. Soc. 130\/} (2002), 555--563.

\bibitem{MMNO}
{\sc Mateu, J., Mattila, P., Nicolau, A., and Orobitg, J.}
\newblock B{MO} for nondoubling measures.
\newblock {\em Duke Math. J. 102}, 3 (2000), 533--565.

\bibitem{MiPu}
{\sc Milman, M., and Pustylnik, E.}
\newblock On sharp higher order {S}obolev embeddings.
\newblock {\em Commun. Contemp. Math. 6}, 3 (2004), 495--511.

\bibitem{R}
{\sc Rivi{\`e}re, N.~M.}
\newblock Interpolation \`a la {M}arcinkiewicz.
\newblock {\em Rev. Un. Mat. Argentina 25\/} (1970/71), 363--377.
\newblock Collection of articles dedicated to Alberto Gonz{\'a}lez
  Dom{\'{\i}}nguez on his sixty-fifth birthday.

\end{thebibliography}

\vspace{0.5cm}
\noindent
\small{\textsc{Department of Mathematics},}
\small{\textsc{FI-20014 University of Turku},}
\small{\textsc{Finland}}\\
\footnotesize{\texttt{daniel.aalto@iki.fi}}

\end{document}